\documentclass[12pt]{amsart}
\usepackage[utf8]{inputenc}
\usepackage{amsbsy,amssymb,amsfonts}
\usepackage{mathtools}
\usepackage[top=3cm,left=2.5cm,right=2.5cm,bottom=3cm]{geometry}
\usepackage{enumerate}
\usepackage[bookmarks=true,hyperindex,pdftex,colorlinks,citecolor=red, linkcolor=blue]{hyperref}

\DeclareMathOperator{\Orb}{Orb}
\DeclareMathOperator{\Span}{span}

\newtheorem{theorem}{Theorem}[section]
\newtheorem{lemma}[theorem]{Lemma}
\newtheorem{proposition}[theorem]{Proposition}

\theoremstyle{definition}
\newtheorem{example}[theorem]{Example}
\newtheorem{question}{Question}
\newtheorem*{BCPquestion}{Question}

\def\RR{\mathbb R}
\def\NN{\mathbb N}

\def\CC{\mathbb C}
\def\KK{\mathbb K}
\def\A{\mathcal A}
\def\L{\mathcal L}
\def\B{\mathcal B}

\usepackage{todonotes}

\title{Existence of hypercyclic algebras in Fréchet algebras}
\author{Fernando Costa Jr.}
\author{Álvaro Rocha}

\address[F. Costa Jr. and A. Rocha]{Universidade Federal da Paraíba - Campus I, Departamento de Matemática, Jardim Universitário, s/n, Bairro Castelo Branco, CEP 58051-900, João Pessoa, Brazil}
\email{fernando@mat.ufpb.br} 

 \allowdisplaybreaks

\begin{document}

\subjclass[2020]{47A16 (primary), 46A04, 46H05, 47A15 (secondary)}
\keywords{Hypercyclic operators, hypercyclic algebras, Fréchet algebras, invariant subalgebras, prescribed orbits}

\begin{abstract}
We prove that every separable infinite-dimensional Fréchet algebra $X$ admits a continuous linear operator $T$ supporting a dense invariant hypercyclic algebra, giving an affirmative answer to a question of Bayart, Costa Jr. and Papathanasiou. In fact, every dense countable-dimensional subalgebra $\A$ of $X$ can be prescribed as an invariant hypercyclic algebra. When $X$ admits a continuous norm, the operator can additionally be chosen in the form $T=I+K$, where $K$ is nuclear, so that $\A = \Span \Orb(a,T)$ for any prescribed $a\in\A\backslash\{0\}$.
\end{abstract}

\maketitle

\section{Introduction}\label{sec:introduction}

Linear dynamics studies the orbits of continuous linear operators on topological vector spaces. An operator $T:X\to X$ is called \emph{hypercyclic} if there exists $x\in X$ whose orbit
\[
  \Orb(x,T):=\{T^n x:n\geq 0\}
\]
is dense in $X$. Such an $x$ is called a \emph{hypercyclic vector}, and the set of all hypercyclic vectors for $T$ is denoted by $HC(T)$. We refer to the monographs \cite{BMbook,GEPbook} for the general theory.

The existence of hypercyclic operators is well understood for Fréchet spaces. Every separable infinite-dimensional Banach space admits a hypercyclic operator, as Ansari \cite{Ansari1997} and Bernal-González \cite{Bernal1999} proved independently in response to a question of Rolewicz \cite{Rolewicz1969}. Bonet and Peris \cite{BonetPeris1998} extended this conclusion to non-normable Fréchet spaces. Since a dense orbit forces separability and a nonzero finite-dimensional space cannot support a hypercyclic operator, these results characterize the nonzero Fréchet spaces on which such operators exist.

A related problem concerns the structure of the set of hypercyclic vectors. Every hypercyclic operator on a Fréchet space admits a dense invariant linear subspace whose nonzero vectors are hypercyclic (see \cite{Herrero1991,Bourdon1993,Bes1999}). A \emph{hypercyclic subspace} for $T$ is a closed infinite-dimensional subspace contained in $HC(T)\cup\{0\}$. Although not every hypercyclic operator has such a subspace, every separable infinite-dimensional Banach space supports an operator that does (see \cite{LeonMontes1997}). Following the results for Fréchet spaces with a continuous norm from \cite{Bernal2006,Petersson2006} and for $\omega=\KK^\NN$ from \cite{BesConejero2006}, Menet \cite[Theorem 2.8]{Menet2013} established the existence of a mixing operator with a hypercyclic subspace on every separable infinite-dimensional Fréchet space.

When $X$ carries an algebra structure, one can ask for a subalgebra inside $HC(T)\cup\{0\}$. A nonzero subalgebra $\mathcal A\subset X$ is called a \emph{hypercyclic algebra} for $T$ if $\mathcal A\setminus\{0\}\subset HC(T)$. Subalgebras are not required to contain the identity of the ambient algebra. For pointwise multiplication on $H(\mathbb C)$, the differentiation operator supports a hypercyclic algebra \cite{BMbook,Shkarin2010}, whereas nontrivial translations do not \cite{Aron2007}. Thus the existence of hypercyclic operators on an algebra leads to a further existence problem. Bayart, Costa Jr. and Papathanasiou \cite[Question 6.6]{BCP2021} asked the following question.

\begin{BCPquestion}
Does every separable infinite-dimensional Banach algebra support an operator with a hypercyclic algebra?
\end{BCPquestion}

We answer this question affirmatively in the more general setting of Fréchet algebras.

\begin{theorem}\label{corol:teo:princ}
Every separable infinite-dimensional Fréchet algebra admits a continuous linear operator $T$ and a dense $T$-invariant hypercyclic algebra of algebraic dimension $\aleph_0$.
\end{theorem}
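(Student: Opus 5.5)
The plan is to prove a stronger prescription statement, as the abstract suggests: given a separable infinite-dimensional Fréchet algebra $X$ and any dense subalgebra $\A\subset X$ of countable algebraic dimension, construct $T\in\L(X)$ with $T(\A)\subset\A$ and $\A\setminus\{0\}\subset HC(T)$. Theorem~\ref{corol:teo:princ} then follows once such an $\A$ is shown to exist. Since $X$ is separable, pick a dense sequence $(x_n)$ and let $\A$ be the subalgebra generated by $\{x_n\}$. It consists of noncommutative polynomials without constant term in the $x_n$ with rational (or rational-complex) coefficients, spanned over $\KK$. So $\A$ is dense and has a countable Hamel basis. Since $X$ is infinite-dimensional and $\A$ is dense, $\A$ is infinite-dimensional, so its dimension is exactly $\aleph_0$.

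The core is a ``prescribed orbits'' construction. Fix a Hamel basis $(e_k)$ of $\A$; orthonormalizing in the Fréchet sense, we may arrange that the $e_k$ tend to zero rapidly in each seminorm after rescaling. Enumerate the countable set $\A\setminus\{0\}$ (up to scalars if convenient) as $(a_j)$, and a countable dense subset $(y_m)$ of $\A$. We build $T$ as a limit of finite-rank perturbations, $T=I+\sum_k \phi_k\otimes u_k$ or more generally $T=\sum_k \phi_k\otimes u_k$, where $\phi_k\in X'$ and $u_k\in\A$. The defining requirements are:
\begin{itemize}
\item[(i)] each $T e_k$ is a finite combination of basis vectors, so that $T\A\subset\A$;
\item[(ii)] for every pair $(j,m)$ and every $\ell$ there are infinitely many $n$ with $T^n a_j$ within $1/\ell$ of $y_m$ in the $\ell$-th seminorm.
\end{itemize}
This is done inductively. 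At stage $s$ we have fixed the action of $T$ on a finite-dimensional subspace $F_s=\Span\{e_1,\dots,e_{N_s}\}$, and we handle requirement $(j,m,\ell)$. We push the finitely many vectors $a_j$ into fresh basis directions $e_{N_s+1},\dots$ and then map them onto $y_m$ plus a small error. This uses functionals, obtained by Hahn--Banach, that vanish on the already-fixed part, and we choose the new vectors small enough in the first $s$ seminorms for the series defining $T$ to converge in $\L(X)$. Continuity comes from requiring $p_i(u_k)\,q(\phi_k)\le 2^{-k}$ for the relevant seminorms $p_i$ and a fixed continuous seminorm $q$ dominating the $\phi_k$.

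The main obstacle is that $T$ is defined by prescribing values on a dense countable-dimensional subspace while remaining continuous, and simultaneously every nonzero $a\in\A$, not just chosen vectors, must be hypercyclic. Handling all $a\in\A\setminus\{0\}$ means handling all of $\A$ at once, since $\A$ is not a finite union of lines. I would address this with an upper-triangular, nilpotent-on-blocks structure. Choose $T$ so that on $\A$ each vector eventually has a ``leading coefficient'' that is moved into a new block, where the dynamics is a weighted backward shift realizing the targets $y_m$. A nonzero $a$ then has orbit governed, up to a small error, by its highest nonzero coordinate, and scaling handles the coefficient. Making this error uniformly small is where the delicate estimates lie.

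In the case of no continuous norm, for example $\omega$-like quotients, the functionals $\phi_k$ need not be controlled by a single norm. There I would use the fact that such $X$ has a quotient or subspace structure like $\omega$, following Menet's and Bonet--Peris's techniques, to keep the partial sums Cauchy in each seminorm separately.
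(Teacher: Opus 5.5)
Your first step matches the paper's Lemma~\ref{lemma1preliminar}: the subalgebra generated by a countable dense set is dense and countable-dimensional, and it is infinite-dimensional because finite-dimensional subspaces are closed. The gap is in the core step. You announce, but do not prove, the existence of a continuous $T$ on $X$ with $T(\A)\subset\A$ and \emph{every} nonzero vector of $\A$ hypercyclic.

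Your bookkeeping in (ii) starts by enumerating $\A\setminus\{0\}$ as $(a_j)$. That set has cardinality $\mathfrak c$, and it stays uncountable modulo scalars once $\dim\A\geq 2$ (the lines of a real plane already form a circle). So the inductive scheme, even if it worked as described, would make only countably many chosen vectors hypercyclic. You notice this, but the repair via a triangular structure governed by leading coefficients is left as a slogan. To make it work you would need, for each $k$, times $n_i$ with $T^{n_i}\to 0$ on $\Span\{e_1,\dots,e_{k-1}\}$ while $T^{n_i}e_k$ follows a dense sequence. This has to hold for all $k$ at once, together with continuity of $T$ and $T(\A)\subset\A$. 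You call these ``the delicate estimates'' and do not carry them out.

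The case without a continuous norm is only gestured at. Bonet--Peris and Menet give a hypercyclic operator and a hypercyclic subspace, respectively. Neither makes all nonzero vectors of a prescribed dense countable-dimensional subspace hypercyclic.

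The missing ingredient is Shkarin's theorem (Theorem~\ref{teo:shkarin:countable}): every metrizable locally convex space $E$ of countably infinite dimension carries an $S\in\L(E)$ for which every nonzero vector is hypercyclic. The paper applies it to $E=\A$ with the topology inherited from $X$. It then extends $S$ by continuity to $T\in\L(X)$, using that $\A$ is dense and $X$ is complete. Then $T(\A)\subset\A$, and for $a\in\A\setminus\{0\}$ the orbit $\Orb(a,T)=\Orb(a,S)$ is dense in $\A$, hence in $X$ (Theorem~\ref{teo:general:prescription}).

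Under a continuous norm, the paper's Theorem~\ref{teoprincipal} shows a self-contained way around the uncountability problem. Make a single orbit $\Orb(a,T)$ equal to a dense Hamel basis of $\A$ (Albanese's Theorem~\ref{teoalbanese}), so that $\A=\{p(T)a : p \text{ a polynomial}\}$. Herrero--Bourdon (Theorem~\ref{teobourdon}) then makes every nonzero $p(T)a$ hypercyclic, so one hypercyclic vector handles all of $\A$. Albanese's theorem needs a continuous norm, which is why the general case rests on Shkarin's result.
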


In fact, Theorem \ref{teo:general:prescription} shows that every dense countable-dimensional subalgebra can be prescribed as an invariant hypercyclic algebra. This follows from Shkarin's theorem on countably dimensional metrizable locally convex spaces \cite{Shkarin2014} by extending the resulting operator to the completion.

When the ambient algebra admits a continuous norm, Theorem \ref{teoprincipal} gives additional control over the operator and its orbit. Given a dense countable-dimensional subalgebra $\A$ and a nonzero vector $a\in\A$, the operator can be chosen in the form $T=I+K$, with $K$ nuclear, so that
\[
    \A=\Span\Orb(a,T).
\]
This refinement combines Albanese's prescribed-subspace construction \cite[Corollary 3.6]{Albanese2011} with the Herrero--Bourdon theorem \cite{Herrero1991,Bourdon1993,Bes1999}.

Section \ref{sec:preliminaries} collects the definitions and the results used in the paper. Section \ref{sec:results} establishes the general prescription result and its refinement under a continuous norm. Section \ref{sec:questions} discusses closed hypercyclic algebras and the extension to nonlocally convex $F$-algebras.

\section{Preliminaries}\label{sec:preliminaries}

All vector spaces are over $\KK\in\{\RR,\CC\}$ and are assumed Hausdorff. We write $\NN=\{1,2,\ldots\}$, and all dimensions are algebraic (Hamel) dimensions.

An \emph{$F$-space} is a complete metrizable topological vector space. A \emph{Fréchet space} is a locally convex $F$-space; its topology can be defined by a separating increasing sequence $(\Vert\cdot\Vert_n)_n$ of seminorms for which $X$ is complete in the metric \[d(x,y)=\sum\limits_{n=1}^\infty\frac{1}{2^n}\min(1,\Vert x-y\Vert_n).\]
If there exists a norm $\Vert\cdot\Vert:X\to\RR$ that is continuous in the topology of $X$, we say that $X$ \emph{admits a continuous norm}.

For a topological vector space $X$, we denote by $\L(X)$ the set of all continuous linear operators $T:X\to X$ and simply call them \emph{operators}.

A \emph{topological algebra} is a topological vector space $X$ that is equipped with a bilinear and associative multiplication $\cdot : X\times X \to X$ that is continuous in the product topology. In particular, when $X$ is a Fréchet space (or an $F$-space), we say that $X$ is a \emph{Fréchet algebra} (or \emph{$F$-algebra}). We say that a topological algebra $X$ is \emph{unital} when there exists $1_X\in X$ such that \[1_X\cdot x=x\cdot 1_X=x\] for all $x\in X$. The element $1_X$ is said to be the \emph{unit} of $X$. For simplicity, we will write $x\cdot y$ as $xy$.

Let $D\subset X$. The \emph{subalgebra generated} by $D$ is \[\A(D):=\Span\{d_1\cdots d_k : k\geq 1, \, d_1,\dots,d_k\in D\}.\]
If $D$ is countable, we say that $\A(D)$ is a \emph{countably generated} subalgebra.

The general prescription result relies on the following theorem of Shkarin.

\begin{theorem}[{\cite[Theorem 1.5]{Shkarin2014}}] \label{teo:shkarin:countable}
Let $E$ be a metrizable locally convex space of countably infinite algebraic dimension. There exists $S\in\L(E)$ such that every nonzero vector of $E$ is hypercyclic for $S$.
\end{theorem}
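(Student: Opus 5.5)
The plan is to reduce the statement to building a single cyclic operator with good spectral behaviour. Suppose we find $S\in\L(E)$ and $x\in E$ such that
\begin{enumerate}[(i)]
\item $E=\Span\Orb(x,S)$, so that $E$ is identified with the polynomial space $\KK[t]$, $S$ acts as multiplication by $t$, and $x$ corresponds to $1$;
\item $x\in HC(S)$;
\item $p(S)$ has dense range for every nonzero polynomial $p$.
\end{enumerate}
Then every $y\in E\setminus\{0\}$ can be written as $y=p(S)x$ with $p\neq 0$, by (i). Moreover $\Orb(y,S)=p(S)\Orb(x,S)$. Since $p(S)$ is continuous with dense range and $\Orb(x,S)$ is dense by (ii), this orbit is dense, which is the Herrero--Bourdon argument. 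Factoring $p$ into linear factors (over $\RR$, linear and irreducible quadratic factors), condition (iii) reduces to: $S-\lambda I$ has dense range for every $\lambda\in\KK$, and over $\RR$ also $(S-\lambda I)(S-\bar\lambda I)$ for $\lambda\in\CC\setminus\RR$. In a transpose formulation this says the adjoint $S'$ has no eigenvalues, real or complex.

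Next we construct $S$ from a Hamel basis $(e_n)$ of $E$ and an increasing sequence of seminorms $(\Vert\cdot\Vert_k)$ defining the topology. Fix a countable dense set $\{z_j\}\subset E$. We define, inductively, vectors $x_0=x,x_1,x_2,\ldots$ with $x_{n+1}=Sx_n$, such that their span is $E$ and each $x_n$ is linearly independent of its predecessors. Two kinds of steps alternate:
\begin{itemize}
\item \emph{Spanning steps:} at infinitely many stages, force the next basis vector $e_m$ into $\Span\{x_0,\dots,x_n\}$.
\item \emph{Approximation steps:} at other stages, choose $x_{n+1}$ with $\Vert x_{n+1}-z_j\Vert_j<1/j$, to obtain (ii).
\end{itemize}
The freedom comes from choosing $x_{n+1}$ in a direction that is almost "invisible" to the first few seminorms. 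In the case without a continuous norm we use vectors on which $\Vert\cdot\Vert_1,\dots,\Vert\cdot\Vert_k$ vanish. In the continuous-norm case we instead use a small perturbation of the form $I+K$, as in Albanese's construction.

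Continuity of $S$ is arranged by the standard countable-dimensional trick: on a countable-dimensional metrizable space, a linear map is continuous as soon as, for each $k$, $\Vert Sv\Vert_k\le C_k\Vert v\Vert_{k'}$ holds on $\Span\{x_0,\dots,x_n\}$ uniformly in $n$. We secure this by choosing at each stage the new vector with its seminorms controlled relative to the earlier ones. This requires a good "almost orthogonal" basis. Since metrizable locally convex spaces of countable dimension admit bases that are well-behaved with respect to the seminorms, we pass to such a basis first.

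The main obstacle is achieving (iii) simultaneously with continuity and (ii). Dense range of $S-\lambda I$ for \emph{every} $\lambda$ is an uncountable family of conditions. The first idea is to impose, along the construction, that the finite-dimensional compressions of $S$ have no eigenvectors approaching a continuous functional. Concretely, we would show that any $\varphi\in E'$ with $S'\varphi=\lambda\varphi$ must satisfy $\varphi(x_n)=\lambda^n\varphi(x)$ for all $n$. The approximation steps then make $\{x_n\}$ approximate the dense set $\{z_j\}$ along a subsequence, so the values $\lambda^n\varphi(x)$ approximate the values of $\varphi$ on a dense set. Comparing with $\varphi(z_j)$ for suitably chosen $z_j$ (for instance $z_j$ and $2z_j$ both approximated along the orbit) should force $\varphi=0$. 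This avoids handling each $\lambda$ separately, and is the step where we expect the most care to be needed.
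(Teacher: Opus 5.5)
The paper does not prove this statement; it quotes it from Shkarin, so your proposal has to stand on its own.

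Your reduction is sound. If $S\in\L(E)$, $x\in HC(S)$ and $\Orb(x,S)$ spans $E$, then every nonzero vector is $p(S)x$, and the Herrero--Bourdon argument (Theorem~\ref{teobourdon}) finishes; its proof only uses Hahn--Banach, so it works in any locally convex space. You have, however, misplaced the difficulty: (iii) is not an extra requirement, since it follows from (ii).
\begin{itemize}
\item If $0\neq\varphi\in E'$ and $S'\varphi=\lambda\varphi$, then $\varphi$ is a continuous surjection onto $\KK$. So $\varphi(\Orb(x,S))=\{\lambda^n\varphi(x):n\ge 0\}$ would be dense in $\KK$, which is impossible.
\item Over $\RR$, suppose $q(S)$ has non-dense range for an irreducible quadratic $q$, and take $\varphi\neq 0$ vanishing on $q(S)E$. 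Then $y\mapsto(\varphi(y),\varphi(Sy))$ is a continuous surjection $E\to\RR^2$; it is surjective because $\varphi$ cannot be an eigenvector of $S'$, as $q$ has no real root. This map intertwines $S$ with the companion matrix of $q$, which is similar to a rotation--dilation, and such orbits are never dense.
\end{itemize}
So no bookkeeping with $z_j$ and $2z_j$ is needed.

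The genuine gap is that the construction of $S$, which is the whole content of the theorem, is never carried out. The continuity criterion you state is just the definition of continuity. What must actually be achieved is
\[
\Big\Vert\sum_{i\le n}c_ix_{i+1}\Big\Vert_k\le C_k\Big\Vert\sum_{i\le n}c_ix_i\Big\Vert_{k'}
\]
for all scalars $c_i$, with $C_k$ independent of $n$.
\begin{itemize}
\item Controlling the seminorms of each new vector only handles $v=x_n$, not $v=x_n-w$ with $w\in\Span\{x_0,\dots,x_{n-1}\}$. Since the $x_n$ are dense and the approximation steps force $x_{n+1}$ near an arbitrary $z_j$, $x_n$ can be $\Vert\cdot\Vert_{k'}$-close to the earlier span while $x_{n+1}$ is far from the image of that span.
\item Grivaux and Albanese handle exactly this with a quantitative nuclear-series construction, balancing increments against the size of biorthogonal functionals. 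Your unspecified ``well-behaved basis'' does not replace this, and its existence is not justified.
\end{itemize}

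Your case split also does not match the tools. Albanese's theorem needs a continuous norm on the completion $\hat E$, and $E$ can have a continuous norm while $\hat E$ does not. For example, take $E\subset\ell_2\times\omega$ to be the dense graph of a linear map from a dense countable-dimensional subspace of $\ell_2$ into $\omega$. Then $(y,z)\mapsto\Vert y\Vert$ is a norm on $E$, but $\hat E=\ell_2\times\omega$ has no continuous norm. In this case:
\begin{itemize}
\item for large $k$ no nonzero vector of $E$ is killed by $\Vert\cdot\Vert_k$, so your ``invisible direction'' device is unavailable;
\item Albanese's theorem does not apply either.
\end{itemize}
Even when $E$ has no continuous norm, the quotients $E/\ker\Vert\cdot\Vert_k$ may be infinite-dimensional normed spaces. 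So respecting kernels does not give continuity for free, as it does for subspaces of $\omega$.

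Finally, when $\hat E$ does admit a continuous norm, no inductive construction is needed. The argument of Theorem~\ref{teoprincipal} already proves the statement: take a Hamel basis of $E$ dense in $\hat E$ (Lemma~\ref{lemma3preliminar}), apply Albanese's theorem on $\hat E$, restrict to $E$, and apply Herrero--Bourdon. The remaining case, $\hat E$ without a continuous norm, is exactly where Shkarin's work is needed, and your sketch gives no argument there.
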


We shall also use the standard fact that every continuous linear map from a dense linear subspace $E$ of a Fréchet space $X$ into $X$, with $E$ carrying the inherited topology, extends uniquely to a continuous linear operator on $X$.

Under a continuous-norm hypothesis, the next result provides a more precise prescribed-orbit construction. It was first proved by Grivaux in \cite{Grivaux2003} for separable Banach spaces. It was subsequently generalized by Albanese in \cite{Albanese2011} for separable Fréchet spaces with a continuous norm.

\begin{theorem}[{\cite[Theorem 3.5]{Albanese2011}}] \label{teoalbanese}
Let $X$ be a separable infinite-dimensional Fréchet space which admits a continuous norm. Let $V=\{v_n : n\geq1\}$ be a dense set of linearly independent vectors of $X$. Then there is an operator $T\in\mathcal{L}(X)$ of the form $T=I+K$, with $K$ a nuclear operator on $X$, such that $\Orb(v_1,T)=V$.
\end{theorem}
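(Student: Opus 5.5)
The plan follows Grivaux's Banach-space argument and adapts it to the Fréchet setting, using the continuous norm in place of the Banach norm wherever a separating, uniformly controlled functional is needed. The key observation is that the conclusion $\Orb(v_1,T)=V$ is an equality of \emph{sets}. So we are free to re-enumerate $V$ as $w_1=v_1,w_2,w_3,\dots$ and only need $T^n w_1=w_{n+1}$. Fix an increasing sequence of seminorms $(\Vert\cdot\Vert_n)_n$ defining the topology, with $\Vert\cdot\Vert_1$ a norm. Recall that an operator of the form $K=\sum_k f_k\otimes y_k$ with $f_k\in X'$ and $\sum_k \Vert f_k\Vert_{U_k}^{*}\,\Vert y_k\Vert_{k}<\infty$ (in a suitable equicontinuous sense) is nuclear. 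The whole construction will therefore produce $K$ as such a series of rank-one operators with rapidly decaying sizes.

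\textbf{Step 1 (a model orbit).} Construct a dense linearly independent sequence $(u_n)$ and a nuclear $K_0$ with $(I+K_0)u_n=u_{n+1}$. Do this inductively, taking the increments $u_{n+1}-u_n$ to be tiny in $\Vert\cdot\Vert_n$. Each increment is realised by a rank-one term $f_n\otimes(u_{n+1}-u_n)$, where $f_n$ is chosen biorthogonal-like, namely $f_n(u_n)=1$ and $f_n(u_j)=0$ for $j<n$. The continuous norm and Hahn--Banach let us take $f_n$ continuous for $\Vert\cdot\Vert_1$ with controlled norm. Density is obtained by letting the $u_n$ drift slowly through a countable dense set, in many small steps.

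\textbf{Step 2 (transfer).} Build an invertible operator $J=I+K_1$, with $K_1$ nuclear, and an enumeration $(w_n)$ of $V$ with $w_1=v_1$ and $Ju_n=w_n$. We then set $T=J(I+K_0)J^{-1}$. This $T$ is again of the form $I+K$ with $K$ nuclear, since nuclear operators form an ideal, and $T^n w_1=w_{n+1}$. The enumeration $(w_n)$ and $J$ are produced by a back-and-forth argument.
\begin{itemize}
\item At odd stages, take the first unused $v\in V$. By density of $(u_n)$, choose an index $m$ with $u_m$ extremely close to $v$, and add a rank-one correction $g\otimes(v-u_m)$, where $g(u_m)=1$ and $g$ vanishes on the finitely many $u_j$ already assigned.
\item At even stages, take the first unassigned index $m$, pick an unused $v\in V$ very close to $u_m$ (possible since $V$ is dense), and correct in the same way.
\end{itemize}
Because the errors are summably small in the seminorms, while the functionals are controlled by the continuous norm, $K_1$ converges nuclearly. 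Choosing the corrections small enough also forces $\Vert K_1\Vert<1$ in an appropriate sense, so $J$ is invertible by a Neumann series. Linear independence of $V$ and of $(u_n)$ is what guarantees that the required functionals $g$ exist at every stage.

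\textbf{Main obstacle.} The delicate step is the simultaneous bookkeeping in Step 2. Each new rank-one correction must vanish on all previously matched vectors, so that earlier assignments $Ju_j=w_j$ are not destroyed. It must also be small in every seminorm $\Vert\cdot\Vert_n$ for $n$ up to the stage, while the dual functional may have large norm, because $u_m$ can be nearly in the span of earlier vectors. I would handle this by choosing the approximating index $m$ only \emph{after} fixing the functional $g$ with $\Vert g\Vert$ bounded by a constant depending on the previously used finite set. The approximation error $\Vert v-u_m\Vert_k$ is then made smaller than $2^{-k}/\Vert g\Vert$, which density always permits. Verifying that this choice also keeps $J$ invertible in the non-normable topology is where I expect most of the technical work.
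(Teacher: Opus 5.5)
The paper gives no proof of this statement; it is quoted from Albanese, who extends Grivaux's Banach-space theorem. Your architecture is sound: a model dense orbit of some $I+K_0$, carried onto $V$ by an invertible $J=I+K_1$ built by back-and-forth, followed by conjugation. Step~2 works with small repairs.

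\textbf{The gap is Step~1, which cannot work as designed.}

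\emph{The formula is wrong.} With $K_0=\sum_n f_n\otimes(u_{n+1}-u_n)$ and $f_n$ vanishing only on $u_j$ for $j<n$, you get
$K_0u_m=(u_{m+1}-u_m)+\sum_{n<m}f_n(u_m)(u_{n+1}-u_n)$.
You cannot also make $f_n$ vanish on the later $u_j$, because a continuous functional vanishing on the dense set $\{u_j:j\neq n\}$ is zero.

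\emph{Small increments are incompatible with your nuclearity bound.} Let $s_n=\Vert u_n-u_{n-1}\Vert_1$ and let $\Vert\cdot\Vert'$ be the dual norm of $\Vert\cdot\Vert_1$. The identity $1=f_n(u_n-u_{n-1})$ forces $\Vert f_n\Vert'\geq 1/s_n$. Hence the $n$-th term has size at least $s_{n+1}/s_n$. Summability of these ratios forces $\sum_n s_n<\infty$, hence $\sup_n\Vert u_n\Vert_1<\infty$. A $\Vert\cdot\Vert_1$-bounded set is never dense, since $\Vert\cdot\Vert_1$ is a continuous norm and is unbounded on $X$.

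\emph{A triangular repair removes the freedom you need.} You could set $z_n=u_{n+1}-u_n-\sum_{j<n}f_j(u_n)z_j$. But then $u_{n+1}$ must lie within $\varepsilon_n/\Vert f_n\Vert'$ of a point dictated by the finite-rank operator already built. No freedom is left to ``drift through a dense set''. Getting density this way amounts to constructing a hypercyclic operator from scratch, and the sketch does not do that.

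\textbf{The fix is to import the model.} On a separable infinite-dimensional Fr\'echet space with a continuous norm there is a hypercyclic $T_0=I+K_0$ with $K_0$ nuclear. One example is a Salas-type perturbation of $I$ by a nuclear backward shift along a biorthogonal sequence with equicontinuous functionals, as in Bonet--Peris.
\begin{itemize}
\item Put $u_n=T_0^{n-1}x_0$ with $x_0\in HC(T_0)$. This orbit is automatically dense and linearly independent, since a nontrivial relation would trap the orbit in a finite-dimensional invariant subspace.
\item Choose $x_0$ close to $v_1$, which is possible because hypercyclic vectors are dense. Otherwise the first assignment $u_1\mapsto v_1$ is not a small correction, and your Neumann-series bound fails.
\end{itemize}

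\textbf{Repairs needed in Step~2.}
\begin{itemize}
\item The correction must be $g\otimes(v-J_ku_m)$, where $J_k$ is the operator built so far, not $g\otimes(v-u_m)$. Earlier functionals need not vanish at the new $u_m$.
\item At odd stages you cannot fix $g$ with $g(u_m)=1$ before choosing $m$. Instead, fix $g_0$ vanishing on the assigned $u_j$ with $g_0(J_k^{-1}v)=1$. This is possible because $J_k^{-1}v$ lies outside their span, by linear independence of $V$. Then take $u_m$ close to $J_k^{-1}v$ and set $g=g_0/g_0(u_m)$.
\item Invertibility in the non-normable setting is not the hard part. Suppose $\delta_i=\sum_j\Vert g_j\Vert'\,\Vert y_j\Vert_i<\infty$ for all $i$ and $\delta_1<1$. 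Then $\Vert K_1^nx\Vert_i\leq\delta_i\delta_1^{n-1}\Vert x\Vert_1$, so the Neumann series $\sum_n(-K_1)^n$ converges and gives a continuous inverse.
\end{itemize}
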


The next result was proved independently by Herrero in \cite{Herrero1991} and by Bourdon in \cite{Bourdon1993} for complex Hilbert spaces. In the same paper, Bourdon states that this result is valid in the Banach setting. In \cite{Bes1999}, Bès provides a proof for real Fréchet spaces and mentions that the result also holds in the complex case.

\begin{theorem}[Herrero-Bourdon] \label{teobourdon}
Let $X$ be a separable infinite-dimensional Fréchet space and let $T\in\L(X)$. If $x\in X$ is a hypercyclic vector for $T$, then \[\{p(T)x : p\mbox{ is a polynomial}\}\backslash\{0\}\] is a dense set of hypercyclic vectors. In particular, any hypercyclic operator admits a dense invariant subspace consisting, except for zero, of hypercyclic vectors. 
\end{theorem}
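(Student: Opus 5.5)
The plan is to show that $p(T)$ has dense range for every nonzero polynomial $p$, and then transport the orbit of $x$ through $p(T)$. Fix a hypercyclic vector $x$ and a nonzero polynomial $p$ of degree $d$. There are three things to prove: $p(T)x\neq 0$; $p(T)x$ is hypercyclic; and the set $\{p(T)x\}\setminus\{0\}$ is dense. For the first point, the vectors $x,Tx,T^2x,\dots$ are linearly independent. Otherwise some nonzero $q$ satisfies $q(T)x=0$, and then $\Orb(x,T)$ lies in the finite-dimensional space $\Span\{x,\dots,T^{\deg q-1}x\}$. That space is closed, so $X$ would be finite-dimensional, a contradiction. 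Hence $p(T)x\neq 0$. The density of the set is immediate, since it contains $\Orb(x,T)=\{T^nx\}$, which is dense.

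The key step is that $p(T)$ has dense range. Suppose not. By Hahn--Banach there is a nonzero $\varphi\in X^*$ with $\varphi\circ p(T)=0$. I will avoid factoring $p$ into linear factors, since over $\RR$ irreducible quadratics would otherwise need separate treatment. Instead, define the continuous linear map
\[
J:X\to\KK^d,\qquad Jy=(\varphi(y),\varphi(Ty),\dots,\varphi(T^{d-1}y)).
\]
Using $\varphi(p(T)T^{k}y)=0$ to rewrite $\varphi(T^dy)$ in terms of the lower powers (after normalizing $p$ to be monic), we get $JT=AJ$, where $A$ is the companion matrix of $p$. The image $F:=J(X)$ is then a nonzero $A$-invariant subspace of $\KK^d$; it is nonzero because $\varphi\neq0$. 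Since $J$ is continuous and surjective onto $F$, we have $J(\Orb(x,T))=\Orb(Jx,A|_F)$ and this set is dense in $F$. So $A|_F$ would be a hypercyclic operator on a nonzero finite-dimensional space, which is impossible, as recalled in the introduction. Hence $p(T)$ has dense range.

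Finally, $p(T)$ commutes with $T$, so $\Orb(p(T)x,T)=p(T)\bigl(\Orb(x,T)\bigr)$. A continuous map with dense range sends dense sets to dense sets, so this orbit is dense and $p(T)x$ is hypercyclic. The ``in particular'' clause follows by taking the subspace $\{p(T)x\}$, which is linear, dense, and $T$-invariant. The main obstacle is the dense-range step. The intertwining map $J$ reduces it to the nonexistence of hypercyclic operators on finite-dimensional spaces, uniformly over $\KK=\RR$ and $\KK=\CC$.
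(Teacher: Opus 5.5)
Your proof is correct. Note that the paper does not prove this theorem; it quotes it from Herrero, Bourdon and Bès, so the comparison is with those classical arguments.

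Bourdon's proof factors $p(T)=c\prod_i (T-\lambda_i I)$ over $\CC$ and shows that each factor has dense range. If $\varphi\circ(T-\lambda I)=0$ with $\varphi\neq 0$, then $\varphi(T^n x)=\lambda^n\varphi(x)$, and this set is not dense in $\CC$; equivalently, $T^*$ has no eigenvalues. A product of dense-range operators has dense range, which concludes the complex case. Over $\RR$ this factorization produces irreducible quadratic factors, and Bès treats those by a separate two-dimensional version of the same idea.

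Your companion-matrix intertwining $JT=AJ$ handles every degree and both fields in one step. This is a genuine simplification of the presentation. The cost is that you invoke the full fact that no nonzero finite-dimensional space supports a hypercyclic operator. Over $\RR$, the proof of that fact itself passes through the real Jordan form, or an eigenvector-or-rotation argument for the transpose. So the field-dependent work is moved into a standard lemma rather than eliminated.

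In exchange, the factorization route gives a sharper intermediate conclusion: $T-\lambda I$ has dense range for every scalar $\lambda$, i.e. the point spectrum of $T^*$ is empty. Your argument gives only the dense range of each $p(T)$, which is exactly what the theorem needs.
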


The following proposition is a direct consequence of \cite[Theorem 1.4 and Remark 1.3]{Shkarin2011}. For the definitions of $\mathfrak{M}_0$ and $\mathfrak{M}_1$, see \cite[Definition 1.2]{Shkarin2011}. We include the short verification for completeness.

\begin{proposition}\label{prop:shkarin:continuous-norm}
    Every separable infinite-dimensional $F$-space $X$ with a continuous norm supports a hereditarily hypercyclic operator.
\end{proposition}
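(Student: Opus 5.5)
The plan is to read the statement off from Shkarin's result \cite[Theorem 1.4 and Remark 1.3]{Shkarin2011}. That theorem says that a separable infinite-dimensional $F$-space belonging to the class $\mathfrak{M}_1$ (respectively, one not in $\mathfrak{M}_0$) supports a hereditarily hypercyclic operator. As I recall, $\mathfrak{M}_0$ consists of the $F$-spaces whose topology is not weaker than any metrizable vector topology of a special kind: essentially, spaces with no nontrivial continuous linear functionals or no suitable weaker norm-type topology. Remark 1.3 of \cite{Shkarin2011} then lists spaces with a continuous norm as lying outside $\mathfrak{M}_0$, or in $\mathfrak{M}_1$. So the task is only to check that our $X$ meets the hypothesis of that theorem.

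Concretely, the verification goes as follows.

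\textbf{Step 1.} Let $\Vert\cdot\Vert$ be a continuous norm on $X$. Then the topology $\tau$ that $\Vert\cdot\Vert$ induces is a Hausdorff metrizable locally convex (indeed normable) vector topology, and it is weaker than the original topology of $X$.

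\textbf{Step 2.} Because $X$ is separable, the completion of $(X,\tau)$ is a separable Banach space. Since $\Vert\cdot\Vert$ is a norm, $\tau$ separates points, so $X$ admits plenty of $\tau$-continuous, hence continuous, linear functionals. Indeed, Hahn--Banach on $(X,\Vert\cdot\Vert)$ gives a countable family of $\Vert\cdot\Vert$-continuous functionals that separates points. A separable space with a continuous norm thus has a separating sequence of continuous functionals.

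\textbf{Step 3.} Step 2 is precisely the property that Definition 1.2 of \cite{Shkarin2011} uses to place $X$ in the class required by Theorem 1.4, as Remark 1.3 points out. Invoking that theorem then yields a hereditarily hypercyclic operator on $X$.

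The main obstacle is matching the exact definitions of $\mathfrak{M}_0$ and $\mathfrak{M}_1$. If the definition demands more than a separating sequence of functionals, my first move would be to build the required structure directly from $\Vert\cdot\Vert$, for example a weaker metrizable topology with specified basis properties. To do this, I would take a dense linearly independent sequence in $X$ together with biorthogonal functionals that are continuous for $\Vert\cdot\Vert$.
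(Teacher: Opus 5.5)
\textbf{There is a genuine gap.} You take the right approach, citing \cite[Theorem 1.4 and Remark 1.3]{Shkarin2011} and checking its hypotheses, which is also the paper's approach. But that hypothesis check is the whole content of the proof, and yours is misdirected.

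First, the theorem requires $X\in\mathfrak{M}_0\cap\mathfrak{M}_1$, not ``$X\in\mathfrak{M}_1$ or $X\notin\mathfrak{M}_0$''. Your description of $\mathfrak{M}_0$ is backwards: every separable $F$-space belongs to $\mathfrak{M}_0$. So a space with a continuous norm is not outside $\mathfrak{M}_0$, and this membership has to be recorded.

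Second, the sufficient condition for $\mathfrak{M}_1$ given by Remark 1.3 is a continuous seminorm whose kernel has infinite codimension. Your Step 2 replaces this with ``a separating sequence of continuous functionals'', which is strictly weaker. On $\omega=\KK^{\NN}$ the coordinate functionals separate points, yet every continuous seminorm on $\omega$ has a kernel of finite codimension. So the claim in Step 3 that Step 2 is ``precisely'' the property used in Definition 1.2 is unfounded, and Steps 2--3 do not establish membership in $\mathfrak{M}_1$. Note also that your verification never uses $\dim X=\infty$, and that is exactly where it is needed.

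The repair is already in your Step 1. For the continuous norm $\Vert\cdot\Vert$ we have $\ker\Vert\cdot\Vert=\{0\}$, whose codimension is $\dim X=\infty$, so $X\in\mathfrak{M}_1$ by Remark 1.3. Together with $X\in\mathfrak{M}_0$ (from separability), Theorem 1.4 with $k=1$ gives a hereditarily hypercyclic semigroup $\{T_t\}_{t\in\KK}$, and $T=T_1$ is the required operator. The fallback you sketch with biorthogonal systems is unnecessary.
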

\begin{proof}
Any separable $F$-space belongs to $\mathfrak{M}_0$, so $X\in \mathfrak{M}_0$. Let $p$ be a continuous norm on $X$. Then $\ker p = p^{-1}(0)=\{0\}$, thus the codimension of $\ker p$ in $X$ is the dimension of $X$ itself, which is infinite. By \cite[Remark 1.3]{Shkarin2011} it follows that $X\in \mathfrak{M}_1$. Therefore, $X\in \mathfrak{M}:=\mathfrak{M}_0\cap\mathfrak{M}_1$. Applying \cite[Theorem 1.4]{Shkarin2011} with $k=1$, there is a hereditarily hypercyclic operator semigroup $\{T_t\}_{t\in\KK}$. In particular, $T=T_1$ is a hereditarily hypercyclic operator on $X$, as we wanted.
\end{proof}

\section{Results}\label{sec:results}

We first establish the elementary facts needed for the existence result and the subsequent orbit-prescription theorem.

\begin{lemma}\label{lemma1preliminar}
    Every separable Fréchet algebra $X$ contains a dense subalgebra $\A_0$ of at most countable algebraic dimension. If $X$ is infinite-dimensional, then $\dim\A_0=\aleph_0$. If $X$ has a unit, then $\A_0$ can be chosen to contain $1_X$.
\end{lemma}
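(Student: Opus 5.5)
Since $X$ is a separable metrizable space, I will fix a countable dense subset $D\subset X$. If $X$ has a unit, I replace $D$ by $D\cup\{1_X\}$; this set is still countable and dense. Then I set $\A_0:=\A(D)$, the subalgebra generated by $D$. It is a subalgebra by construction, and it is dense because $D\subset\A_0$. It contains $1_X$ whenever $D$ does. This settles the first and third assertions, provided I also check the dimension bound.

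For the dimension bound, observe that the set of finite products
\[
P:=\{d_1\cdots d_k : k\geq 1,\ d_1,\dots,d_k\in D\}
\]
is a countable union over $k$ of images of the countable sets $D^k$. Hence $P$ is countable. Since $\A_0=\Span P$, any linearly independent subset of $P$ spanning $\A_0$ is a Hamel basis of at most countable cardinality. So $\dim\A_0\leq\aleph_0$.

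Now assume $X$ is infinite-dimensional, and suppose, for contradiction, that $\dim\A_0<\infty$. A finite-dimensional subspace of a Hausdorff topological vector space is closed. Therefore $\A_0=\overline{\A_0}=X$, which would make $X$ finite-dimensional, a contradiction. Hence $\dim\A_0=\aleph_0$.

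No step is a genuine obstacle. The only point needing a standard fact is the closedness of finite-dimensional subspaces, which holds in any Hausdorff topological vector space because all Hausdorff vector topologies on $\KK^n$ coincide. Completeness, local convexity and continuity of the multiplication are not needed; only separability and the Hausdorff property are used.
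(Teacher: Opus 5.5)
Your proposal is correct and follows essentially the same route as the paper: take $\A_0=\A(D)$ for a countable dense $D$ (with $1_X\in D$ in the unital case), and rule out finite dimension via closedness of finite-dimensional subspaces in a Hausdorff space. You additionally spell out why the set of finite products is countable, which the paper leaves implicit.
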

\begin{proof}
    Let $D$ be a countable and dense subset of $X$. If $X$ is unital, we can assume $1_X\in D$. Define $\A_0=\A(D)$. Since $D\subset\A_0$, it follows that $\A_0$ is dense in $X$. Clearly, $\A_0$ is countably generated, because $D$ is countable. Thus, $\dim\A_0\leq\aleph_0$. If $X$ is infinite-dimensional, then $\A_0$ cannot be finite-dimensional, otherwise it would be closed (since $X$ is Hausdorff), which gives $\A_0=\overline{\A_0}=X$, which is impossible. Thus $\dim \A_0=\aleph_0$. Finally, if $X$ has a unit $1_X$, then $1_X\in D\subset \A_0$ so the algebra $\A_0$ is unital.
\end{proof}

The following lemma is close to \cite[Lemma 4.3]{SchenkeShkarin2013} and can be demonstrated using a very similar argument. We provide a proof here for the sake of completeness.

\begin{lemma}\label{lema2preliminar}
    Let $X$ be a separable infinite-dimensional Fréchet space. 
    Every dense subspace $E\subset X$ contains a countable, dense, linearly independent subset of $X$. 
\end{lemma}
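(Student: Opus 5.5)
The plan is a diagonal construction: enumerate a countable basis of open sets of $X$ and pick, one at a time, vectors of $E$ in each open set while keeping linear independence. Since $X$ is a separable metrizable space, it is second countable, so we can fix a countable base $(U_n)_{n\geq1}$ of nonempty open sets of $X$. Because $E$ is dense in $X$, each $E\cap U_n$ is a nonempty open subset of $E$ with its inherited topology. We will build inductively vectors $v_1,v_2,\dots\in E$ with $v_n\in U_n$ and $\{v_1,\dots,v_n\}$ linearly independent for every $n$. The set $V=\{v_n:n\geq1\}$ is then countable, contained in $E$, and linearly independent. It is also dense in $X$, since it meets every basic open set $U_n$.

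The inductive step is the only substantive point. Suppose $v_1,\dots,v_{n-1}$ have been chosen and are linearly independent, and let $F=\Span\{v_1,\dots,v_{n-1}\}$. We must find $v_n\in E\cap U_n$ with $v_n\notin F$. Now $F$ is finite-dimensional, hence closed in the Hausdorff space $X$. Since $X$ is infinite-dimensional, $F\neq X$, so $F$ has empty interior: a subspace with an interior point is absorbing and therefore equals $X$. Thus $U_n\setminus F$ is a nonempty open set. By density of $E$, the set $E\cap(U_n\setminus F)$ is nonempty, and we pick $v_n$ in it. Then $v_n\notin F$, so $\{v_1,\dots,v_n\}$ is linearly independent.

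We expect no real obstacle. The only facts needed are that finite-dimensional subspaces of a Hausdorff topological vector space are closed, and that proper subspaces have empty interior. Together these guarantee that at each stage the open set $U_n$ still meets $E$ outside the span of the previously chosen vectors. In particular, completeness and local convexity of $X$ play no role here: separability, metrizability (for second countability), the Hausdorff property and infinite-dimensionality suffice.
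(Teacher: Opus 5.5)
Your proposal is correct and is essentially the paper's proof. You fix a countable base $(U_n)$ and inductively pick $v_n\in E\cap(U_n\setminus F_{n-1})$, using that the finite-dimensional proper subspace $F_{n-1}$ is closed with empty interior; your remark that completeness and local convexity are not used is also accurate.
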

\begin{proof}
    Let $E\subset X$ be a dense subspace and $(U_n)_n$ be a countable basis of nonempty open sets of $X$. Choose a nonzero $e_1\in E\cap U_1$ and consider $F_1=\Span\{e_1\}$. Then $F_1$ is a closed proper subspace of $X$. In particular, $\operatorname{int}F_1=\varnothing$. Thus, $U_2\not\subset F_1$ and hence $U_2\setminus F_1$ is a nonempty open set. Since $E$ is dense, there is $e_2\in E\cap(U_2\setminus F_1)$. Now, suppose that $e_1,\cdots, e_{n-1}$ have been chosen with $e_i\in E\cap U_i$ and $e_i\notin F_j$, for all $j=1,\cdots,i-1$ and for all $i=2,\cdots,n-1$. Let $F_{n-1}=\Span\{e_1,\cdots,e_{n-1}\}$. Similarly, $F_{n-1}$ is a closed and proper subspace of $X$. Thus, $U_n\not\subset F_{n-1}$ and $U_n\setminus F_{n-1}$ is a nonempty open set. Hence, there is $e_n\in E\cap(U_n\setminus F_{n-1})$. This defines a sequence $(e_n)_{n\geq1}$ such that $e_n\notin F_{n-1}$, for all $n\geq 2$. It follows that $(e_n)_n$ is a linearly independent set. Moreover, $\{e_j\}_{j\geq1}\cap U_n\neq\varnothing$, for all $n\geq1$. Therefore, $(e_n)_n$ is dense in $X$.
\end{proof}
\begin{lemma}\label{lemma3preliminar}
    Let $X$ be a separable infinite-dimensional Fréchet space. Every dense subspace $E\subset X$ admits a Hamel basis $\B$ that is dense in $X$. 
\end{lemma}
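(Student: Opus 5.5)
Since $X$ is separable and metrizable, the dense subspace $E$ is itself separable, so $\dim E\le 2^{\aleph_0}$. It is also infinite-dimensional, because a finite-dimensional subspace is closed and cannot be dense in the infinite-dimensional space $X$. The plan is to start from the countable dense linearly independent set given by Lemma \ref{lema2preliminar} and then enlarge it to a Hamel basis of $E$. Any superset of a dense set is dense, so density is preserved automatically.

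First, apply Lemma \ref{lema2preliminar} to $E$ to obtain a countable linearly independent set $D=\{e_n:n\ge1\}\subset E$ that is dense in $X$. Second, invoke the standard Zorn's lemma fact that every linearly independent subset of a vector space extends to a Hamel basis. Concretely, consider the family of linearly independent subsets of $E$ containing $D$, ordered by inclusion. Chains have upper bounds given by their unions, since linear independence is a finitary condition. A maximal element $\B$ spans $E$: otherwise, adding a vector of $E\setminus\Span\B$ would contradict maximality. Third, observe that $D\subset\B$ and $D$ is dense in $X$, so $\overline{\B}\supset\overline{D}=X$ and $\B$ is dense.

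The only substantive step is producing a dense linearly independent set inside $E$, and Lemma \ref{lema2preliminar} already does this. The extension step is routine. If the intended statement also requires $\B$ to be countable, for instance when $\dim E=\aleph_0$ as for the algebra $\A_0$ of Lemma \ref{lemma1preliminar}, then the extension can be done without Zorn's lemma. Enumerate a spanning sequence $(f_k)$ of $E$ and interleave: at each step, add $f_k$ if it is not in the span of the vectors chosen so far. This keeps the resulting basis countable and still containing $D$.
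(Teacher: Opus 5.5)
Your proof is correct and matches the paper's argument: take the dense linearly independent set from Lemma \ref{lema2preliminar}, extend it to a Hamel basis of $E$, and note that a superset of a dense set is dense. Your closing countability remark is unnecessary, because any Hamel basis of a countable-dimensional space is automatically countable (the paper relies on this in Theorem \ref{teoprincipal}); if you do the extension by hand anyway, start from all of $D$ and add only those $f_k\notin\Span(D\cup\{f_1,\dots,f_{k-1}\})$, since a literal alternation $e_1,f_1,e_2,\dots$ that keeps every $e_n$ can destroy linear independence.
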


\begin{proof}
    By Lemma \ref{lema2preliminar},  $E$ admits a linearly independent and dense subset in $X$, say $M$. We can extend $M$ to a Hamel basis $\B$ of $E$. Since $M$ is dense in $X$, this implies that $\B$ is dense in $X$.
\end{proof}

We first obtain the general prescription result as a consequence of Shkarin's theorem.

\begin{theorem}\label{teo:general:prescription}
Let $X$ be a separable infinite-dimensional Fréchet algebra and $\A\subset X$ be a dense countable-dimensional subalgebra. There exists $T\in\L(X)$ such that
\[
    T(\A)\subset\A
    \qquad\text{and}\qquad
    \A\setminus\{0\}\subset HC(T).
\]
\end{theorem}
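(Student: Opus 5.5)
The plan is to apply Shkarin's Theorem \ref{teo:shkarin:countable} to $\A$ itself, viewed as a topological vector space, and then extend the resulting operator to $X$ by density. Equip $\A$ with the topology inherited from $X$. Since $X$ is Fréchet, this topology is metrizable and locally convex, and by hypothesis $\dim\A=\aleph_0$. The algebra structure of $\A$ plays no role at this stage; only its linear and topological structure matter. Theorem \ref{teo:shkarin:countable} then gives $S\in\L(\A)$ such that every nonzero $a\in\A$ is hypercyclic for $S$ in $\A$, so $\Orb(a,S)$ is dense in $\A$.

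Next, we extend $S$. The map $S\colon\A\to\A\subset X$ is continuous and linear from the dense subspace $\A$ into the Fréchet space $X$. By the standard extension fact recalled in Section \ref{sec:preliminaries}, there is a unique $T\in\L(X)$ with $T|_{\A}=S$. Since $S(\A)\subset\A$, this immediately gives $T(\A)\subset\A$.

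It remains to check hypercyclicity in $X$. For $a\in\A\setminus\{0\}$ we have $T^na=S^na$ for all $n\ge0$, so $\Orb(a,T)=\Orb(a,S)$. This set is dense in $\A$ for the inherited topology, and $\A$ is dense in $X$. Closures are transitive here, since the closure in $X$ of a set dense in $\A$ contains $\overline{\A}=X$. Hence $\Orb(a,T)$ is dense in $X$, that is, $a\in HC(T)$.

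I do not expect a genuine obstacle. The main points to verify are that Shkarin's theorem applies to a non-complete metrizable locally convex space, which is exactly its stated hypothesis, and that the extension preserves continuity. The latter uses only that a continuous linear map on a metrizable locally convex space into a complete space is uniformly continuous and so extends from a dense subspace to the completion. Here $\overline{\A}=X$, so that completion is $X$ itself.
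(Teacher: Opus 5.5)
Your proposal is correct and is essentially the paper's proof: apply Shkarin's theorem to $\A$ with the inherited topology, extend the resulting operator to $X$ by density, and observe that $\Orb(a,T)=\Orb(a,S)$ is dense in $\A$, hence in $X$. One point you attribute to the hypothesis, and which the paper also leaves implicit, actually needs a line of justification: $\dim\A=\aleph_0$ rather than finite, because a finite-dimensional dense subspace of the Hausdorff space $X$ would be closed and hence equal to the infinite-dimensional $X$.
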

\begin{proof}
Endow $\A$ with the topology inherited from $X$. By Theorem \ref{teo:shkarin:countable}, there exists $S\in\L(\A)$ for which every nonzero vector of $\A$ is hypercyclic. Let $T\in\L(X)$ be its continuous extension. Then $T(\A)\subset\A$, and, for every $a\in\A\setminus\{0\}$, the orbit $\Orb(a,T)=\Orb(a,S)$ is dense in $\A$, hence in $X$.
\end{proof}

The existence statement announced in the introduction follows immediately.

\begin{proof}[Proof of Theorem \ref{corol:teo:princ}]
Apply Lemma \ref{lemma1preliminar} to obtain a dense subalgebra of algebraic dimension $\aleph_0$, and then apply Theorem \ref{teo:general:prescription}.
\end{proof}

When $X$ admits a continuous norm, we can additionally prescribe a vector whose orbit spans the algebra and choose the operator as a nuclear perturbation of the identity. The following result combines Albanese's prescribed-subspace construction \cite[Corollary 3.6]{Albanese2011} with Theorem \ref{teobourdon} and the preceding lemmas.

\begin{theorem}\label{teoprincipal}
Let $X$ be a separable infinite-dimensional Fréchet algebra with a continuous norm. Let $\A$ be a dense countable-dimensional subalgebra of $X$ and let $a\in \A\backslash\{0\}$. Then there exists $T\in\mathcal{L}(X)$ of the form $T=I+K$, where $K\in \L(X)$ is nuclear, such that $\A=\Span \Orb(a,T)$. In particular, $\A$ is a $T$-invariant hypercyclic algebra for $T$.
\end{theorem}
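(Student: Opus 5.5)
We will realize $\A$ as the linear span of a single orbit through Theorem \ref{teoalbanese}, and then obtain hypercyclicity of all nonzero elements of $\A$ from Theorem \ref{teobourdon}. The plan is to build a dense, linearly independent sequence $V=\{v_n:n\geq1\}$ with $v_1=a$, contained in $\A$ and spanning $\A$, such that the sequence is also a Hamel basis of $\A$.

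\textbf{Constructing $V$.} By Lemma \ref{lema2preliminar}, the dense subspace $\A$ contains a countable, dense, linearly independent set $M=\{m_n\}$. First we adjust $M$ to include $a$. Since $\Span\{a\}$ is closed with empty interior, we can redo the inductive construction from the proof of Lemma \ref{lema2preliminar} starting with $e_1=a$. Concretely, at step $n$ we pick $e_n\in \A\cap (U_{n}\setminus F_{n-1})$. This yields a dense linearly independent sequence in $\A$ with first term $a$.

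Next we enlarge it to a Hamel basis of $\A$ while keeping the enumeration. Fix a Hamel basis $\{b_k\}$ of $\A$, which is countable. We interleave: at odd stages we add the first $b_k$ not in the current span; at even stages we add an element of $\A\cap(U_j\setminus F)$ as above. The result is a sequence $(v_n)$ with $v_1=a$ that is linearly independent, dense in $X$, and whose span contains every $b_k$. Hence $\Span V=\A$. This is essentially Lemma \ref{lemma3preliminar} made constructive and countable, and it is routine.

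\textbf{Applying the prescribed-orbit theorem.} Since $X$ admits a continuous norm, Theorem \ref{teoalbanese} gives $T=I+K$ with $K$ nuclear and $\Orb(v_1,T)=V$. Thus $\Span\Orb(a,T)=\Span V=\A$. Invariance follows at once: $T(\Span\Orb(a,T))\subset\Span\Orb(a,T)$. Moreover $a$ is hypercyclic, because its orbit $V$ is dense. Finally, every element of $\Span\Orb(a,T)$ has the form $p(T)a$ for a polynomial $p$. So Theorem \ref{teobourdon} shows that every nonzero element of $\A$ lies in $HC(T)$. Since $\A$ is a subalgebra, it is therefore a $T$-invariant hypercyclic algebra.

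\textbf{Main obstacle.} The only delicate point is the bookkeeping in the construction of $V$. Three properties must hold at once:
\begin{enumerate}[(i)]
\item $v_1=a$ exactly, since Albanese's theorem prescribes the orbit of the first vector;
\item $V$ is linearly independent;
\item $V$ both is dense and spans all of $\A$, not merely a dense subspace of it.
\end{enumerate}
The alternating construction handles all three. The one fact to check is that at each step the finite-dimensional span $F$ is closed and nowhere dense, so that $U_j\setminus F$ is a nonempty open set meeting the dense set $\A$. This holds because $X$ is an infinite-dimensional Hausdorff space. Everything else is a direct application of the cited theorems.
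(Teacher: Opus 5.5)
Your proof is correct and follows the same route as the paper: produce a dense Hamel basis of $\A$ whose first vector is $a$, apply Theorem \ref{teoalbanese}, and conclude with Theorem \ref{teobourdon}. The only difference is how you build that basis. You use a direct interleaved induction starting at $a$. The paper instead takes the dense Hamel basis $\B$ from Lemma \ref{lemma3preliminar} and exchanges a basis vector $u_1$, on which $a$ has a nonzero coefficient, for $a$ itself; this avoids the bookkeeping you identify as the main obstacle.
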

\begin{proof}
    By Lemma \ref{lemma3preliminar}, $\A$ admits a Hamel basis $\B$ which is dense in $X$. Since $\A$ is countable-dimensional, $\B$ is countable. We can write $a=\sum_{j=1 }^{m}\alpha_j u_j$, for some $m\in\NN$, some distinct $u_1,\dots,u_m\in\B$ and some $\alpha_1,\dots,\alpha_m$ with $\alpha_1\neq 0$. Define $\B'=(\B\setminus\{u_1\})\cup\{a\}$. It is clear that $\B'$ is still a dense, countable, linearly independent set that spans $\A$. Writing $\B'=\{v_n\}_n$ with $v_1=a$, it follows from Theorem \ref{teoalbanese} that there exists an operator $T\in\mathcal{L}(X)$ of the form $T=I+K$, where $K$ is nuclear, such that $\Orb(a,T)=\B'$. Hence, $\A=\Span\B'=\Span\Orb(a,T)$.  In particular, $a$ is a hypercyclic vector for $T$. By Theorem \ref{teobourdon}, the algebra \[\A=\Span\Orb(a,T)=\{p(T)a : p\text{ is a polynomial}\}\]
    is a $T$-invariant set whose nonzero elements are hypercyclic.
\end{proof}

\section{Final remarks and open questions}\label{sec:questions}

The preceding results settle the existence problem for separable infinite-dimensional Fréchet algebras. We conclude with questions concerning closed hypercyclic algebras and the possible extension to nonlocally convex setting.

Hypercyclic subspaces are closed and infinite-dimensional by definition. The existence of such structures in any separable infinite-dimensional Fréchet space (see \cite{Bernal2006,Menet2013}) motivates an analogous question for algebras. In \cite{BCP2021}, Bayart, Costa Jr. and Papathanasiou proved that no convolution operator $P(D)$ acting on $H(\CC)$ and induced by a non-constant polynomial $P$ has a closed hypercyclic algebra. They also proved that no weighted backward shifts on $\ell_p(\NN)$ or $c_0(\NN)$ with the coordinatewise product has a closed hypercyclic algebra either. This leads to the question whether every separable infinite-dimensional Banach algebra admits an operator supporting a closed hypercyclic algebra.

This problem, however, can have trivial answers in some situations. Let $X$ be a separable infinite-dimensional Fréchet algebra. If $X$ admits a finite-dimensional subalgebra $\A\neq \{0\}$, then there is an operator $T\in\L(X)$ for which $\A$ is a closed hypercyclic algebra. Indeed, it suffices to consider a basis $\{e_1,\cdots, e_n\}$ of $\A$. Hence, since $X$ is separable, there is a countable and dense subset of $X$, say $D'$. Define $D=D'\cup\{e_1,\cdots,e_n\}$. By an argument entirely analogous to that of Lemma \ref{lemma1preliminar}, the algebra $\A_0:=\A(D)$ is countable-dimensional and dense in $X$. Moreover, $\A\subset\A_0$. We now use Theorem \ref{teo:general:prescription} to prescribe $\A_0$, and thus $\A$, as a hypercyclic algebra for some operator $T\in\L(X)$. Since $\A$ is finite-dimensional, it is a closed hypercyclic algebra for $T$. Observe that having a non-trivial finite-dimensional subalgebra is always true for unital algebras, since we may take $\A=\KK 1_X$. 

Yet, not every Banach algebra contains a nonzero finite-dimensional subalgebra, as the following example shows.

\begin{example}
Endow the space $\ell_1(\NN)$ with the \emph{Cauchy product} $(x_n)\cdot (y_n) = (c_n)$, where $c_n=\sum_{k=1}^{n-1}x_{n-k}y_k.$ Then $(\ell_1(\NN),\cdot)$ is a non-unital Banach algebra that contains no non-trivial finite-dimensional subalgebra. Indeed, given $x\in \ell_1$ nonzero, it is not difficult to see that the set $\{x, x^2, x^3, \dots\}$ is linearly independent. Therefore, $\A(\{x\})$ must be infinite-dimensional.
\end{example}

The dimension discussion above motivates the following question.

\begin{question} \label{question:banach:closed:inf-dim}
    Does every separable infinite-dimensional Banach algebra admit an operator with a closed infinite-dimensional hypercyclic algebra?
\end{question}

The algebras obtained by our constructions have countable algebraic dimension, thus they cannot contain a closed infinite-dimensional subspace, since every such subspace of a Banach (or Fréchet) space has algebraic dimension at least $\mathfrak{c}$ (see \cite{PopoolaTweddle1977}). Therefore, these constructions do not settle Question \ref{question:banach:closed:inf-dim}.

Another natural question is whether our results can be extended to the context of $F$-algebras. Of course, not all $F$-spaces admit a hypercyclic operator. Indeed, as shown by Shkarin in \cite{Shkarin2012}, the space $X = L_p[0,1]\times\KK^n$, with $0< p<1$ and $n\in\NN$, is an $F$-space that supports no hypercyclic operator. It follows from Proposition \ref{prop:shkarin:continuous-norm} that this space cannot admit a continuous norm. For separable infinite-dimensional $F$-algebras with a continuous norm, Proposition \ref{prop:shkarin:continuous-norm} settles the existence of hypercyclic operators, leaving the following algebraic question.

\begin{question}\label{question:falgebra:continuous-norm}
    Does every separable infinite-dimensional $F$-algebra with a continuous norm support an operator with a hypercyclic algebra?
\end{question}

\section*{Funding} 

The first author was partially supported by the Conselho Nacional de Desenvolvimento Científico e Tecnológico (CNPq, Brazil) through grants 406457/2023-9, 403964/2024-5 and 304165/2025-5. The second author acknowledges support from the Coordenação de Aperfeiçoamento de Pessoal de Nível Superior (CAPES, Brazil).

\section*{Conflict of Interest Statement}

The authors have no conflicts of interest to disclose.

\bibliographystyle{abbrv}
\bibliography{bib}

\end{document}